\newtheorem{thm}{Theorem}
\theoremstyle{definition}
\newtheorem{remark}{Remark}
\begin{document}
\begin{frontmatter}

\title{On spectra of probability measures generated by~GLS-expansions}

\author{\inits{M.}\fnm{Marina}\snm{Lupain}}\email{marinalupain@npu.edu.ua}
\address{National Pedagogical Dragomanov University, Pyrogova Str. 9, Kyiv, Ukraine}


\markboth{M. Lupain}{On spectra of probability measures generated by GLS-expansions}

\begin{abstract}
We study properties of distributions of random variables with
independent identically distributed symbols of generalized L\"uroth
series (GLS) expansions (the family of GLS-expansions contains
L\"uroth expansion and $Q_\infty$- and $G_{\infty}^2$-expansions). To
this end, we explore fractal properties of the family of Cantor-like
sets $C[\mathit{GLS},V]$ consisting of real numbers whose GLS-expansions contain
only symbols from some countable set $V \subset N \cup\{0\}$, and
derive exact formulae for the determination of the
Hausdorff--Besicovitch dimension of $C[\mathit{GLS},V]$. Based on these results,
we get general formulae for the Hausdorff--Besicovitch dimension of the
spectra of random variables with independent identically distributed
GLS-symbols for the case where all but countably many points from the
unit interval belong to the basis cylinders of GLS-expansions.
\end{abstract}

\begin{keyword}
Random variables with independent GLS-symbols\sep
$Q_{\infty}$-expansion\sep
N-self-similar sets\sep
Hausdorff--Besicovitch dimension
\MSC[2010] 11K55\sep28A80
\end{keyword}
\received{5 August 2016}
\revised{27 September 2016}
%
\accepted{27 September 2016}
\publishedonline{26 October 2016}
\end{frontmatter}
\section{Introduction}
During the last 20 years, many authors studied singularly continuous
probability measures generated by different expansions of real numbers
(see, e.g., \cite{BPT,NT,P98,Torbin2002,TorbinUMJ2005,TorbinSP2007,TorbinPratsiovytyi1992}).
All these measures are the distributions of random variables of the form
\[
\xi=\varDelta^{F}_{\xi_1\xi_2\dots\xi_k\dots},
\]
where $\{\xi_k\}$ are independent or Markovian, and $F$ stands for some
expansion of real numbers. For the case of expansions over finite
alphabets, fractal properties of the spectra of the corresponding
measures are relatively well studied. For the case of infinite
alphabets, the situation is essentially more complicated. In \cite
{IP96} and \cite{NT}, it has been shown that even for self-similar
$Q_\infty$-expansion and for i.i.d.\ case, the Hausdorff--Besicovitch
dimension of the corresponding spectra cannot be calculated in a
traditional way (as a root of the corresponding equation), and formulae
for the Hausdorff dimension of the measure $\mu_\xi$ are also unknown.

In this paper, we generalize results from \cite{IP96} and \cite{NT} for
the case of distributions of random variables with independent
identically distributed GLS digits
\[
\xi=\varDelta^{\mathit{GLS}}_{\xi_1\xi_2\dots\xi_k\dots}
\]
and get general formulae for the determination of the
Hausdorff--Besicovitch dimension of spectra of $\xi$ for the case where
all but countably many points from the unit interval belong to the
basis cylinders of GLS-expansion.

\section{On GLS-expansion and fractal properties of related probability
measures}

Let $Q_{\infty}=(q_0,q_1,\dots,q_n,\dots)$ be an infinite stochastic
vector with positive coordinates.
Let us consider a countable sequence $\varDelta_i=[a_i,b_i]$ of intervals
such that $\mathit{Int}(\varDelta_i) \cap \mathit{Int}(\varDelta_j) = \emptyset\ (i\neq j)$ and
$|\varDelta_i|=q_i$. The sets $\varDelta_i$ are said to be cylinders of
GLS-expansion (generalized L\"uroth series).

Let us remark that the placement of cylinders of rank 1 is completely
determined by the preselected procedure.

For every cylinder $\varDelta_{i_1}$ of rank 1, we consider a sequence of
nonoverlapping closed intervals $\varDelta_{i_1i_2}\subset\varDelta_{i_1}$
such that
\[
\frac{|\varDelta_{i_1i_2}|}{|\varDelta_{i_1}|}=q_{i_2}
\]
and the placement of $\varDelta_{i_1i_2}$ in $\varDelta_{i_1}$ is the same as
$\varDelta_{i_1}$ in $[0;1]$.
The closed intervals $\varDelta_{i_1i_2}$ are said to be cylinders of rank
2 of the GLS-expansion.

Similarly, for every cylinder of rank $(n-1)$ $\varDelta_{i_1i_2\dots
i_{n-1}}$, we consider the sequence of nonoverlapping closed intervals
$\varDelta_{i_1i_2\dots i_n}\subset\varDelta_{i_1i_2\dots i_{n-1}}$
such that
\[
\frac{|\varDelta_{i_1i_2\dots i_n}|}{|\varDelta_{i_1i_2\dots
i_{n-1}}|}=q_{i_n}, \quad i\in N\cup\{0\},
\]
and the placement of $\varDelta_{i_1i_2\dots i_n}$ in
$\varDelta_{i_1i_2\dots i_{n-1}}$ is the same as $\varDelta_{i_1}$ in $[0;1]$.

The closed intervals $\varDelta_{i_1i_2\dots i_n}$ are said to be
cylinders of rank $n$ of the GLS-expansion.
From the construction it follows that
\[
|\varDelta_{i_1i_2...i_n}|=q_{i_1}\cdot q_{i_2}\cdot\ldots\cdot
q_{i_n}\leq(q_{\max})^n \rightarrow0 \quad (n\rightarrow
\infty),
\]
where $q_{\max}:=\max_i q_i$.

So, for any sequence of indices $\{i_k\}$ ($i_k \in N\cup\{0\})$,
there exists the sequence of embedded closed intervals
\[
\varDelta_{i_1}\subset\varDelta_{i_1i_2}\subset\varDelta_{i_1i_2i_2}
\subset\cdots \subset\varDelta_{i_1i_2\dots i_k}\subset\cdots
\]
with $|\varDelta_{i_1i_2\dots i_k}|\rightarrow0,\ k\rightarrow\infty$.
Therefore, there exists a unique point $x \in[0,1]$ that belongs to
all these cylinders.

Conversely, if $x \in[0,1]$ belongs to some cylinder of rank $k$ for
any $k\in N$ and $x$ is not an end-point for any cylinder, then there
exists a unique sequence of the cylinders
\[
\varDelta_{i_1(x)}\supset\varDelta_{i_1(x)i_2(x)}\supset\varDelta _{i_1(x)i_2(x)i_3(x)}
\supset\cdots\supset\varDelta_{i_1(x)i_2(x)\dots
i_k(x)}\supset\cdots
\]
containing $x$, and
\[
x=\bigcap_{k=1}^{\infty}\varDelta_{i_1(x)i_2(x)\dots i_k(x)}=
\varDelta _{i_1(x)i_2(x)\dots i_k(x)\dots}.
\]
The latter expression is called the GLS-expansion of $x$ (see, e.g.,
\cite{Ar,BDK,DK,L2014,LT2014} for details).

Let us remark that the L\"uroth expansion and $Q_\infty$-expansion \cite
{IP96,NT} are particular cases of the GLS-expansion. For the
case where the ratio of lengths of two embedded cylinders of successive
ranks depends on the last index and it is a power of $\varphi=\frac
{1+\sqrt{5}}{2}$, we get the $G_{\infty}^2$-expansion of $x$ \cite{F2008}.

Let $Q_\infty=(q_0,q_1,\dots,q_n,\dots)$ be a stochastic vector with
positive coordinates, and let
$x=\varDelta_{i_1(x) i_2(x) \dots i_k(x) \dots}^{\mathit{GLS}}$ be the
GLS-expansion of $x\in[0,1]$.

Let $\{\xi_k\}$ be a sequence of independent identically distributed
random variables:
\[
P(\xi_k=i):=p_i\geq0,
\]
where
\[
\sum_{i=0}^{\infty}p_i=1.
\]

Using the sequence $\{\xi_k\}$ and a given GLS-expansion, let us
consider the random variable
\[
\xi=\varDelta_{\xi_1\xi_2...\xi_k...}^{\mathit{GLS}},
\]
which is said to be the random variable with independent identically
distributed GLS-symbols.
Let $\mu_\xi$ be the corresponding probability measure.

To investigate metric, topological, and fractal properties of the
spectrum of the random variable with independent identically
distributed GLS-symbols, let us study properties of the following
family of sets. Let $V$ be a subset of $N_0:=\{0,1,2,\dots\}$, and let
\[
C[\mathit{GLS},V]=\bigl\{x: x=\varDelta^{\mathit{GLS}}_{\alpha_1(x)\dots\alpha_k(x)\dots},\alpha _k
\in V\bigr\}.
\]

If the set $V$ is finite, then $C[\mathit{GLS},V]$ is a self-similar set
satisfying the open set condition (see, e.g., \cite{Fal}). So, its
Hausdorff--Besicovitch dimension coincides with the root of the equation
\begin{equation}
\label{root} \sum\limits
_{i \in V} q_i^x = 1.
\end{equation}

If the set $V$ is countable, then the situation is essentially more
complicated. In particular, there exist stochastic vectors $Q_{\infty}$
and subsets $V$ such that equation \eqref{root} has no roots on the
unit interval.

For example, if $q_i=\frac{A}{(i+2)\ln^2(i+2)}$ and $V=N$, then the
equation $\sum_{i \in V} q_i^x = 1$ has no roots on $[0;1]$.

\begin{thm}[]\label{thm1}
If a stochastic vector $Q_{\infty}$ and a set $V\subset N_0$ are such
that the equation $\sum_{i\in V}q_i^x=1$
has a root $\alpha_0$ on $[0,1]$, then
\[
\dim_H\bigl(C[\mathit{GLS},V]\bigr)=\alpha_0.
\]
\end{thm}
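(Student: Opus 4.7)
The plan is to prove the matching inequalities $\dim_H C[\mathit{GLS},V]\le\alpha_0$ and $\dim_H C[\mathit{GLS},V]\ge\alpha_0$ separately.

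For the upper bound I would employ the natural cover by cylinders of rank $n$ with digits drawn from $V$:
\[
C[\mathit{GLS},V]\subset\bigcup_{(i_1,\dots,i_n)\in V^n}\varDelta_{i_1\cdots i_n}.
\]
Every such cylinder has diameter at most $q_{\max}^n\to 0$, so the cover becomes arbitrarily fine as $n\to\infty$. The defining identity for $\alpha_0$ then gives
\[
\sum_{(i_1,\dots,i_n)\in V^n}|\varDelta_{i_1\cdots i_n}|^{\alpha_0}=\Bigg(\sum_{i\in V}q_i^{\alpha_0}\Bigg)^n=1,
\]
so the $\alpha_0$-dimensional Hausdorff outer measure of $C[\mathit{GLS},V]$ does not exceed $1$, which yields $\dim_H C[\mathit{GLS},V]\le\alpha_0$.

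For the lower bound I would construct a Bernoulli-type probability measure $\mu$ supported on $C[\mathit{GLS},V]$ by choosing i.i.d.\ GLS-symbols $\{\eta_k\}$ with $P(\eta_k=i)=q_i^{\alpha_0}$ for $i\in V$; the hypothesis $\sum_{i\in V}q_i^{\alpha_0}=1$ is exactly what makes these probabilities sum to $1$. By construction $\mu$ obeys the self-similarity identity
\[
\mu(\varDelta_{i_1\cdots i_n})=\prod_{k=1}^nq_{i_k}^{\alpha_0}=|\varDelta_{i_1\cdots i_n}|^{\alpha_0}\qquad\text{for every }(i_1,\dots,i_n)\in V^n,
\]
so along the shrinking cylinder sequence $\{\varDelta_{i_1(x)\cdots i_n(x)}\}$ the local dimension of $\mu$ at every $x\in C[\mathit{GLS},V]$ is already exactly $\alpha_0$. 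To conclude via the mass distribution principle I still have to upgrade this cylinder estimate to a ball estimate $\mu(B(x,r))\le C r^{\alpha_0}$: for $x\in C[\mathit{GLS},V]$ and $r>0$, I would take the largest $n=n(x,r)$ for which $B(x,r)\subset\varDelta_{i_1(x)\cdots i_n(x)}$, and then exploit that the placement of rank-$(n{+}1)$ subcylinders inside their parent is an affine copy of the placement of rank-$1$ cylinders in $[0,1]$, so $B(x,r)$ only meets the rank-$(n{+}1)$ subcylinder containing $x$ together with a small number of neighbouring rank-$(n{+}1)$ subcylinders.

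The main obstacle I expect is precisely this last step, because $V$ is countable: inside a parent cylinder the subcylinder lengths $q_i\,|\varDelta_{i_1(x)\cdots i_n(x)}|$ may decay arbitrarily fast in $i$, so the ratio $|\varDelta_{i_1(x)\cdots i_n(x)}|/|\varDelta_{i_1(x)\cdots i_{n+1}(x)}|$ is unbounded and a single ``match the rank to $r$'' prescription is not enough. Following the scheme used for the $Q_\infty$-case in \cite{IP96,NT}, the remedy I have in mind is to split the analysis according to whether $r$ is comparable to $|\varDelta_{i_1(x)\cdots i_{n+1}(x)}|$ or lies deep in the gap between $|\varDelta_{i_1(x)\cdots i_{n+1}(x)}|$ and $|\varDelta_{i_1(x)\cdots i_n(x)}|$; in the second, delicate, regime one bounds the mass of the relevant neighbouring rank-$(n{+}1)$ subcylinders using the summability $\sum_{i\in V}q_i^{\alpha_0}=1$ together with the concavity of $t\mapsto t^{\alpha_0}$ on $[0,1]$, and the resulting estimate $\mu(B(x,r))\le C r^{\alpha_0}$ feeds into the mass distribution principle to give $\dim_H C[\mathit{GLS},V]\ge\alpha_0$.
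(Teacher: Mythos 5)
Your upper bound coincides with the paper's: covering $C[\mathit{GLS},V]$ by rank-$n$ cylinders with digits in $V$ gives $\alpha_0$-volume $(\sum_{i\in V}q_i^{\alpha_0})^n=1$, hence $H^{\alpha_0}(C[\mathit{GLS},V])\le1$ and $\dim_H\le\alpha_0$. The lower bound is where you diverge from the paper, and it is also where the proposal has a genuine gap --- exactly at the step you yourself flag. The cylinder identity $\mu(\varDelta_{i_1\cdots i_n})=|\varDelta_{i_1\cdots i_n}|^{\alpha_0}$ does not upgrade to a ball estimate $\mu(B(x,r))\le Cr^{\alpha_0}$ by the device you sketch: concavity of $t\mapsto t^{\alpha_0}$ yields $\sum_j t_j^{\alpha_0}\ge(\sum_j t_j)^{\alpha_0}$, which is the \emph{wrong} direction for bounding the total mass of the many rank-$(n{+}1)$ subcylinders swallowed by $B(x,r)$ in terms of the $\alpha_0$-th power of their total length. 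Worse, the uniform estimate can genuinely fail for a countable alphabet: if infinitely many subcylinders $\varDelta_{i_1\cdots i_n j}$ accumulate at a point and the tail of $(q_j)$ is chosen so that $\sum_{j\ge J}q_j^{\alpha_0}\gg\bigl(\sum_{j\ge J}q_j\bigr)^{\alpha_0}$ (e.g. $q_j^{\alpha_0}\sim c/(j\log^2j)$ along the tail), then an interval $U$ covering that tail has $\mu(U)\gg|U|^{\alpha_0}$, so the mass distribution principle with exponent $\alpha_0$ is simply unavailable. This breakdown is precisely the phenomenon the introduction and \cite{IP96,NT} emphasize, so the appeal to ``the scheme used for the $Q_\infty$-case'' is circular: those works do not repair the ball estimate, they avoid it.

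The paper's lower bound sidesteps the infinite-alphabet geometry entirely. It exhausts $V$ by finite subsets $V_k=\{i_1,\dots,i_k\}$; each $C[\mathit{GLS},V_k]$ is a finite self-similar set satisfying the OSC, so its dimension $\alpha_k$ is the root of $\sum_{i\in V_k}q_i^x=1$ (for finitely many maps the bounded-overlap argument is unproblematic). Then an elementary series argument shows $\alpha_k\uparrow\alpha_0$: if $\sup_k\alpha_k=\alpha^*<\alpha'<\alpha_0$, passing to the limit in $\sum_{i\in V_k}q_i^{\alpha'}<1$ gives $\sum_{i\in V}q_i^{\alpha'}\le1$, contradicting $\sum_{i\in V}q_i^{\alpha'}>\sum_{i\in V}q_i^{\alpha_0}=1$. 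Monotonicity of $\dim_H$ then yields $\dim_H C[\mathit{GLS},V]\ge\alpha_k\to\alpha_0$. If you wish to keep the measure-theoretic framing, apply your Bernoulli measure to the finite subsystems only; as written, your lower bound is not established.
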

\begin{proof}

First, let us show that for any $k \in N$, the set $C[\mathit{GLS},V]$ can be
covered by cylinders of rank $k$ and that the $\alpha_0$-volume of this
covering is equal to 1.

For $k=1$, the set $C[\mathit{GLS},V]$ can be covered by cylinders of rank 1.
It easy to see that the $\alpha_0$-volume is equal to 1:
\[
\sum\limits
_{i_1\in V}|\varDelta_{i_1}|^{\alpha_0}=\sum
\limits
_{i_1 \in
V}q_{i_1}^{\alpha_0}=1.
\]

Suppose that for $k=n-1$, the $\alpha_0$-volume of the covering of
$C[\mathit{GLS},V]$ by cylinders of rank $n-1$ is equal to 1. Let us show that
for $k=n$, the $\alpha_0$-volume of the covering of $C[\mathit{GLS},V]$ by
cylinders of rank $n$ will not change. We have
\begin{align*}
\sum\limits
_{i_j\in V}|\varDelta_{i_1i_2...i_{n-1}i_n}|^{\alpha_0}&=\sum
\limits
_{i_j\in V}(q_{i_1}q_{i_2}\dots
q_{n-1}q_{n})^{\alpha_0}
\\
&=\sum\limits
_{i_1\in V}q_{i_1}^{\alpha_0}\cdot\sum
\limits
_{i_j\in V}(q_{i_1}q_{i_2}\dots
q_{n-1})^{\alpha_0}=1 .
\end{align*}

So, for any $\varepsilon>0$, we get
\[
H_{\varepsilon}^{\alpha_0}\bigl(C[\mathit{GLS},V]\bigr)\leq1.
\]
Hence,
\[
H^{\alpha_0}\bigl(C[\mathit{GLS},V]\bigr)\leq1.
\]

By the definition of the Hausdorff--Besicovitch dimension we get
\[
\dim_H\bigl(C[\mathit{GLS},V]\bigr)\leq\alpha_0.
\]

Let us show that $\dim_H(C[\mathit{GLS},V])\geq\alpha_0$. To this end, let us
consider sets $V=\{i_1,\dots,i_k,\dots\}$, $V_k=\{i_1,\dots,i_k\},\
k\geq2,\ k \in N$, and the sequence $C[\mathit{GLS},V_k]$ of subsets of $C[\mathit{GLS},V]$.
For all $k\geq2,\ k\in N$, we have
\[
C[\mathit{GLS},V_k] \subset C[\mathit{GLS},V_{k+1}],
\]
and, therefore,
\[
\dim_H\bigl(C[\mathit{GLS},V_k]\bigr)\leq\dim_H
\bigl(C[\mathit{GLS},V_{k+1}]\bigr).
\]

Let $\dim_H(C[\mathit{GLS},V_k])=\alpha_k$. The sets $C[\mathit{GLS},V_k]$ are
self-similar and satisfy the open set condition (OSC). Hence, the
Hausdorff--Besicovitch dimension $\alpha_k$ of $C[\mathit{GLS},V_k]$ coincides
with the solution of the equation
\[
\sum\limits
_{i \in V_k}q_i^x=1.
\]
It is clear that $\alpha_2<\alpha_3<\cdots<\alpha_k<\cdots$ and $\alpha
_k<\alpha_0$. So, the sequence $\{\alpha_k\}$ is increasing and
bounded. Therefore, there exists a limit $\lim_{k\rightarrow
\infty}\alpha_k=\alpha^*$.

It is clear that $\alpha^*\leq\alpha_0$ because $\alpha_k < \alpha_0\
(\forall k \in N)$. Let us prove that $\alpha^*=\alpha_0$.

Assume the opposite: let $\alpha^* <\alpha_0$. Then there exists $\alpha
'$ such that $\alpha^*<\alpha' <\alpha_0$.
Then $\sum_{i \in V_k}q_i^{\alpha'}<1$ for all $k \in N$. Since
$\sum_{i\in V_k}q_i^{\alpha_k}=1$, we get $\sum_{i\in
V_k}q_i^{\alpha'}<1 $ for all $k \in N$. Let us consider the series
$\sum_{k=1}^{\infty} q_{i_k}^{\alpha'}$. It is clear that $\sum_{k=1}^{n}q_{i_k}^{\alpha'}<1$ for all $ n \in N$. So $\lim_{n \rightarrow\infty}\sum_{k=1}^{n}q_{i_k}^{\alpha'}
\leq1$. Therefore, $\sum_{i \in V}q_i^{\alpha'}\leq1$.

Since $q_i^{\alpha'}>q_i^{\alpha_0}$ for all $i \in V$ and $\sum_{i\in V}q_i^{\alpha_0}=1$, we get
$\sum_{i \in V} q_i^{\alpha'}>\sum_{i \in V}q_i^{\alpha
_0}=1$, which contradicts the already proven inequality $\sum_{i
\in V}q_i^{\alpha'}\leq1$.
This proves that $\alpha^*=\alpha_0$.

Since for any $k\geq2,\ k\in N$,
\[
\alpha_k = \dim_H\bigl(C[\mathit{GLS},V_k]\bigr)
\leq\dim_H\bigl(C[\mathit{GLS},V]\bigr),
\]
we get
\[
\alpha_0\leq\dim_H\bigl(C[\mathit{GLS},V]\bigr).
\]
Thus,
\[
\alpha_0 =\dim_H\big([\mathit{GLS},V]\big).\qedhere
\]
\end{proof}

\begin{thm}[]\label{thm2}
If the matrix $Q_{\infty}$ and the set $V=\{i_1, i_2,\dots
,i_k,i_{k+1},\dots\}$ are such that equation $\sum_{i \in
V}q_i^x=1$ has no roots on $[0,1]$,
then
\[
\dim_H\bigl(C[\mathit{GLS},V]\bigr)=\lim_{k \rightarrow\infty}
\dim_H\bigl([\mathit{GLS},V_k]\bigr),
\]
where $V_k=\{i_1,i_2,\dots,i_k\},\ k \in N,\ k\geq2$.
\end{thm}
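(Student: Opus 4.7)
The plan is to establish the two one-sided inequalities that, together, yield the claimed equality. Set $\alpha_k:=\dim_H(C[\mathit{GLS},V_k])$; since $V_k$ is finite, Theorem~\ref{thm1} applies and gives $\alpha_k$ as the unique root of $\sum_{i\in V_k}q_i^x=1$ on $[0,1]$. The inclusion $V_k\subset V_{k+1}$, combined with strict monotonicity in $x$ of each truncated sum, forces $\alpha_k\leq\alpha_{k+1}$, and every $\alpha_k\leq 1$, so $\alpha^*:=\lim_{k\to\infty}\alpha_k$ exists and lies in $[0,1]$.

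For the lower bound I would invoke the inclusion $C[\mathit{GLS},V_k]\subset C[\mathit{GLS},V]$, which is immediate from the definitions and gives $\alpha_k\leq\dim_H(C[\mathit{GLS},V])$ for every $k$; passing to the limit yields $\alpha^*\leq\dim_H(C[\mathit{GLS},V])$.

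The substantive direction is the reverse inequality. Fix an arbitrary $\alpha'$ with $\alpha^*<\alpha'\leq 1$ (if $\alpha^*=1$ there is nothing to prove, since $\dim_H\leq 1$ is automatic). Because $\alpha'>\alpha_k$ and $x\mapsto\sum_{i\in V_k}q_i^x$ is strictly decreasing, for every $k$
\[
\sum_{i\in V_k}q_i^{\alpha'}<\sum_{i\in V_k}q_i^{\alpha_k}=1.
\]
The partial sums are nondecreasing in $k$ and uniformly bounded by $1$, so by monotone convergence $\sum_{i\in V}q_i^{\alpha'}\leq 1$. Now I would repeat the covering estimate from the proof of Theorem~\ref{thm1}: $C[\mathit{GLS},V]$ is contained in the union of the rank-$n$ cylinders $\{\varDelta_{i_1\dots i_n}:(i_1,\dots,i_n)\in V^n\}$, whose total $\alpha'$-volume factors as $\bigl(\sum_{i\in V}q_i^{\alpha'}\bigr)^n\leq 1$ and whose diameters are bounded by $q_{\max}^n\to 0$. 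Hence $H^{\alpha'}(C[\mathit{GLS},V])\leq 1$, which gives $\dim_H(C[\mathit{GLS},V])\leq\alpha'$. Letting $\alpha'\downarrow\alpha^*$ completes the argument.

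The main obstacle to anticipate is conceptual rather than technical: the no-root hypothesis removes the clean identity $\sum_{i\in V}q_i^{\alpha_0}=1$ that drove Theorem~\ref{thm1}, so the dimension must be pinned down by approximation from below via the finite sub-alphabets $V_k$, and one has to make the covering argument work with the weaker bound $\sum_{i\in V}q_i^{\alpha'}\leq 1$ instead of equality. The monotone-convergence passage from the strict inequalities on $V_k$ to the non-strict inequality on $V$ is routine, but it is precisely the step that carries all the content of the proof.
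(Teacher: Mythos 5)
Your proposal is correct, and its overall strategy coincides with the paper's: approximate $C[\mathit{GLS},V]$ from below by the self-similar sets $C[\mathit{GLS},V_k]$, let $\alpha^*=\lim_k\alpha_k$, obtain the lower bound from the inclusions, and derive $\sum_{i\in V}q_i^{\alpha'}\leq 1$ for $\alpha'>\alpha^*$ by monotone convergence from the truncated identities $\sum_{i\in V_k}q_i^{\alpha_k}=1$. Where you diverge is in how this last inequality is converted into the upper bound on the dimension. The paper argues by contradiction: assuming $\alpha'<\dim_H(C[\mathit{GLS},V])$, it asserts $H^{\alpha'}(C[\mathit{GLS},V])=\infty$, passes to the Hausdorff measure computed with respect to the family $\varPhi$ of cylinder coverings (a locally fine net), concludes that the rank-$k$ cylinder coverings must have $\alpha'$-volume exceeding any $M$, and then contradicts this with the factorization $\bigl(\sum_{i\in V}q_i^{\alpha'}\bigr)^k<1$. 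This route silently relies on the comparability of the ordinary Hausdorff measure with the net measure built from $\varPhi$ (infinitude of the former must force infinitude of the latter). You instead use the factorized bound directly: the rank-$n$ cylinders with indices in $V^n$ cover $C[\mathit{GLS},V]$, have total $\alpha'$-volume at most $1$ and diameters at most $q_{\max}^n\to 0$, so $H^{\alpha'}(C[\mathit{GLS},V])\leq 1$ and $\dim_H\leq\alpha'$; letting $\alpha'\downarrow\alpha^*$ finishes. Since upper bounds on Hausdorff measure only require exhibiting one good covering, your version avoids the net-measure comparison entirely and is the cleaner of the two; it proves the same statement with strictly fewer ingredients.
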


\begin{proof}
The sets $C[\mathit{GLS},V_k]$ are self-similar and satisfy the OSC. Thus the
dimension $\alpha_k$ can be obtained as a solution of the equation $\sum_{i \in V_k}q_i^x=1$. It is easy to see that
\[
\alpha_2<\alpha_3<\cdots<\alpha_{k-1}<
\alpha_k <1.
\]

Therefore, there exists the limit
\[
\lim_{k \rightarrow\infty}\alpha_k = \alpha^*.
\]
It is clear that $\alpha^*\leq\dim_H(C[\mathit{GLS},V])$ because
$C[\mathit{GLS},V_k]\subset C[\mathit{GLS},V]$ for all $k\geq2$.
Suppose that $\alpha^*<\dim_H(C[\mathit{GLS},V])$. Then there exists a number
$\alpha'$ such that $\alpha^*<\alpha'<\dim_H(C[\mathit{GLS},V])$.
It is clear that
\[
\sum\limits
_{i \in V_2}q_i^{\alpha'}<\sum
\limits
_{i \in V_3}q_i^{\alpha
'}<\cdots<\sum
\limits
_{i \in V_k}q_i^{\alpha'}<1.
\]

So, $\sum_{i \in V}q_i^{\alpha'}\leq1$.

On the other hand, $H^{\alpha'}(C[\mathit{GLS}, V])=\infty$ by the definition of
the Hausdorff--Besicovitch dimension (because $\alpha'<\dim_H(C[\mathit{GLS},V])$).
Then the $\alpha'$-dimen\-sional Hausdorff measure of the set $C[\mathit{GLS},
V]$ with respect to the family $\varPhi$ of coverings that are generated
by the GLS-expansion of the unit segment is equal to $H^{\alpha
'}(C[\mathit{GLS},\break \varPhi])=\infty$
(where the family $\varPhi$ is a locally fine system of the coverings of
the unit segment, i.e., for any $\varepsilon>0$, there exists such a
covering of $[0,1] $ by the subsets $E_j\in\varPhi$ such that
$|E_j|<\varepsilon$ and $[0,1]=\bigcup_jE_j$).
Since the set $C[\mathit{GLS}, V]$ can be covered by cylindrical segments of the
GLS-expansion with indices from $V$, we deduce that for any $M>0$,
there exists $k(M)$ such that for all $k>k(M)$, we have the inequality
\begin{align*}
\sum_{i_q \in V, q\in\{1,\ldots,k\}}|\varDelta_{i_1i_2\dots i_k}|^{\alpha'}&>M\xch{,}{.}
\\
\sum_{i_q \in V, q\in\{1,\dots,k\}}|\varDelta_{i_1i_2\dots i_k}|^{\alpha'}&=
\sum_{i_q \in V, q\in\{1,\ldots,k-1\}}|\varDelta_{i_1i_2\dots i_{k-1}}|^{\alpha'}\cdot
\sum_{i_k \in V}\varDelta_{i_k}^{\alpha'}
\\
&<\sum_{i_q \in V, q\in\{1,\dots,k-1\}}|\varDelta_{i_1i_2\dots i_{k-1}}|^{\alpha'}
\\
&=\sum_{i_q \in V, q\in\{1,\ldots,k-2\}}|\varDelta_{i_1i_2\dots i_{k-2}}|^{\alpha'}
\cdot\sum_{i_{k-1}\in V}q_{i_{k-1}}^{\alpha'}
\\
&<\sum_{i_q \in V, q\in\{1,\dots,k-2\}}|\varDelta_{i_1i_2\dots i_{k-2}}|^{\alpha'}<
\cdots
\\
&<\sum_{i_1 \in V}|\varDelta_{i_1}|^{\alpha'}=
\sum_{i_1 \in V}q_{i_1}^{\alpha'}<1.
\end{align*}

From the obtained contradiction it follows that
\[
\lim_{k\rightarrow\infty}\alpha_k= \dim_H
\bigl(C[\mathit{GLS},V]\bigr),
\]
where $\alpha_k=\dim_H(C[\mathit{GLS},V_k])$.
\end{proof}

\begin{remark}\label{rem1}
Theorems~\ref{thm1} and~\ref{thm2} can be considered as natural generalizations of
results from \cite{IP96}.
\end{remark}

\begingroup
\abovedisplayskip=5pt
\belowdisplayskip=5pt
\begin{thm}[]\label{thm3}
The Hausdorff--Besicovitch dimension can be calculated as follows\textup{:}
\[
\dim_H\bigl(C[\mathit{GLS},V]\bigr)=\sup\biggl\{x: \sum
_{i\in V}q_i^x \geq1\biggr\}
\]
for any $Q_{\infty}$ and $V=\{i_1, i_2,\dots,i_k,i_{k+1},\dots\}$.
\end{thm}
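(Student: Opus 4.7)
My plan is to combine Theorems~\ref{thm1} and~\ref{thm2} by comparing the supremum $s:=\sup\{x\in[0,1]:\sum_{i\in V}q_i^x\geq 1\}$ with the quantities those theorems produce, exploiting the fact that $f(x):=\sum_{i\in V}q_i^x$ is strictly decreasing on its domain of finiteness (each $q_i\in(0,1)$, being a positive coordinate of a stochastic vector with infinitely many terms).

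If equation~\eqref{root} has a root $\alpha_0$ on $[0,1]$, then strict monotonicity forces $f(x)>1$ for every $x<\alpha_0$ (including values at which $f(x)=+\infty$) and $f(x)<1$ for every $x>\alpha_0$; hence $s=\alpha_0$, and Theorem~\ref{thm1} yields $\dim_H(C[\mathit{GLS},V])=\alpha_0=s$ directly.

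If \eqref{root} has no root on $[0,1]$, Theorem~\ref{thm2} gives $\dim_H(C[\mathit{GLS},V])=\alpha^{*}:=\lim_{k\to\infty}\alpha_k$, where $\alpha_k$ is the unique solution of $\sum_{i\in V_k}q_i^{\alpha_k}=1$, and I only need to identify $\alpha^{*}$ with $s$. I would argue in two symmetric steps. For $x<\alpha^{*}$, pick $k$ with $\alpha_k>x$; monotonicity of $q_i^{\cdot}$ then gives $f(x)\geq\sum_{i\in V_k}q_i^x>\sum_{i\in V_k}q_i^{\alpha_k}=1$, so $s\geq\alpha^{*}$. For $x>\alpha^{*}$, one has $\alpha_k<x$ for every $k$, whence $\sum_{i\in V_k}q_i^x<1$ for every $k$; passing to the limit by monotone convergence yields $f(x)\leq 1$, and since this case excludes a root in $[0,1]$ the inequality must be strict, so $s\leq\alpha^{*}$.

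The only mild subtlety I anticipate is this final step---upgrading $f(x)\leq 1$ to $f(x)<1$---which is immediate once one notices that equality would contradict the standing assumption of Case~2. Everything else is a routine bookkeeping exercise combining the strict monotonicity of $f$ with the conclusions of the two preceding theorems.
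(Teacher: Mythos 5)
Your proposal is correct and follows essentially the same route as the paper: both arguments reduce the claim to Theorems~\ref{thm1} and~\ref{thm2} and identify the supremum with $\alpha_0$ (resp.\ $\lim_k\alpha_k$) by exploiting the strict monotonicity of $\varphi(x)=\sum_{i\in V}q_i^x$ together with the roots $\alpha_k$ of the truncated equations. If anything, your two-sided comparison in Case~2 is stated more cleanly than the paper's contradiction argument, but the underlying idea is identical.
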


\begin{proof}

Let $\alpha_0=\dim_H(C[\mathit{GLS},V])$. Show that $\sup\{x: \sum_{i \in
V} q_i^x \geq1\}\geq\alpha_0$.
Let us consider the function
\[
\varphi(x)=\sum_{i\in V}q_i^x
\]
and denote the set
\[
A_+=\bigl\{x: \varphi(x)\geq1\bigr\}.
\]
Let $\alpha_k $ and $\alpha_{k+1}$ be the solutions of the equations
\[
\sum_{i\in V_k}q_i^x = 1
\]
and
\[
\sum_{i\in V_{k+1}}q_i^x = 1,
\]
respectively.

Let us show that $\alpha_k<\alpha_{k+1}<\alpha_0$.
If $\alpha_0$ is the solution of $\sum_{i\in V}q_i^x = 1$, then
it is easy to see that $\alpha_k<\alpha_{k+1}<\alpha_0$.
If $\sum_{i\in V}q_i^x = 1$ has no roots on $[0,1]$, then
\[
\alpha_0=\lim_{k\rightarrow\infty}\alpha_k
\]
and $\alpha_k<\alpha_{k+1}$, so that $\alpha_k<\alpha_{k+1}<\alpha_0$.

Express the function $\varphi(x)$ as follows:
\[
\varphi(x)=\underbrace{q_{i_1}^x+\cdots+q_{i_k}^x+q_{i_{k+1}}^x}_{\geq
1}+
\sum_{j=k+1}^\infty q_{i_j}^x.
\]
It is easy to see that for all $x\in[\alpha_k, \alpha_{k+1}]$
$(x<\alpha_0,\ k\in N)$, $\varphi(x)\geq1$.
Then $A_+\supset(-\infty; \alpha_0)$ and $\sup A_+\geq\alpha_0$.

Let us show that $\sup A_+\leq\alpha_0$. Suppose the opposite. If $\sup
A_+>\alpha_0$, then there exists $x_1$ such that $x_1 \in(\alpha_0;
\sup A_+]$, $x_1 \in A_+ $, and
$\varphi(x_1)\geq1$. So
\[
\sum_{i \in V}q_i^{x_1}\geq1.
\]

Since $\alpha_k$ is a solution of $\sum_{i\in V_k}q_i^x = 1$ and
$\alpha_k<\alpha_0$, we get
\[
\sum_{i\in V_k}q_i^{\alpha_k} = 1
\]
and
\[
\sum_{i\in V_k}q_i^{\alpha_0} \leq1.
\]
It is clear that
\[
\sum_{i\in V}q_i^{\alpha_0} <1
\]
and
\[
\sum_{i\in V}q_i^{x_1} \leq1.
\]

So, from the obtained contradiction it follows that
$\sup A_+=\alpha_0$.
\end{proof}

Let $\varDelta^{\mathit{GLS}}_\infty$ be the set of those $x\in[0;1]$ that do not
belong to any cylinder of the first rank of the GLS-expansion. The set
$\varDelta^{\mathit{GLS}}_\infty$ can be empty, countable, or of continuum cardinality.

Let us recall that the nonempty and bounded set $E$ is called
$N$-self-similar if it can be represented as a union of a countably
many sets $E_j\ (\dim_H (E_i\cap E_j)< \dim_H E, i \neq j)$ such that
the set $E$ is similar to the sets $E_j$ with coefficient~$k_j$.

Since the spectrum $S_\xi$ of the distribution of a random variable $\xi
$ with independent identically distributed GLS-symbols is a
self-similar or $N$-self-similar set and
$S_\xi=\break(C[\mathit{GLS},V])^{\mathit{cl}}$, we can apply the above results to calculate
the Hausdorff--Besicovitch dimension of the spectrum $S_\xi$ for the
case where $\varDelta^{\mathit{GLS}}_\infty$ is an at most countable
set.\vadjust{\eject}

So, we get the following theorem, which can be considered as a
corollary of Theorems~\ref{thm1} and~\ref{thm2}.

\begin{thm}[]\label{thm4}
Let $V:=\{i:p_i>0\}$. If $\varDelta^{\mathit{GLS}}_\infty$ is at most countable,
then the Hausdorff--Besicovitch dimension of the spectrum of the
distribution of a~random variable $\xi$ with independent identically
distributed $\mathit{GLS}$-symbols can be calculated in the following way.

1) If the equation $\sum_{i\in V}q^x_i=1$ has one root $\alpha
_0$ on $[0,1]$, then
\[
\dim_H S_\xi=\alpha_0.
\]

2) If the equation $\sum_{i\in V}q^x_i=1$ has no roots on
$[0,1]$, then
\[
\dim_H S_\xi=\lim_{k\rightarrow\infty}
\alpha_k,
\]
where $\alpha_k$ are the roots of the equations
$\sum_{i\in V_k}q^x_i=1,\ V_k=\{i_1, i_2,\dots,i_k\}\subset V$.
\end{thm}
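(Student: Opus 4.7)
The plan is to reduce Theorem~\ref{thm4} to Theorems~\ref{thm1} and~\ref{thm2} by establishing the identity $\dim_H S_\xi = \dim_H(C[\mathit{GLS},V])$. The text preceding the statement already notes that $S_\xi=(C[\mathit{GLS},V])^{\mathit{cl}}$, so it suffices to show that the closure operation adds only a set of Hausdorff dimension zero to $C[\mathit{GLS},V]$, after which the two cases of the theorem are immediate applications of Theorem~\ref{thm1} (when a root $\alpha_0$ exists) and Theorem~\ref{thm2} (when it does not).

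The first step is to describe the set $S_\xi\setminus C[\mathit{GLS},V]$. A point $x$ in this difference is a limit of points whose GLS-digits all lie in $V$, but $x$ itself is not GLS-expansible with digits from $V$. There are only two possible obstructions: either $x$ is an endpoint of some basis cylinder (so that it admits two distinct GLS-codings and at least one uses a digit outside $V$), or else $x$ eventually fails to lie in any cylinder, i.e.\ some GLS-shift $\sigma^n(x)$ lands in $\varDelta^{\mathit{GLS}}_\infty$. The collection of cylinder endpoints is countable by construction, and the hypothesis that $\varDelta^{\mathit{GLS}}_\infty$ is at most countable, together with the fact that only countably many cylinders of bounded rank can contain preimages, shows that the second class is also at most countable. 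Hence
\[
S_\xi \subset C[\mathit{GLS},V]\cup N,\qquad N \text{ at most countable.}
\]

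By countable stability of the Hausdorff--Besicovitch dimension and the vanishing of $\dim_H$ on countable sets, this inclusion together with $C[\mathit{GLS},V]\subset S_\xi$ yields $\dim_H S_\xi=\dim_H(C[\mathit{GLS},V])$. Case 1) then follows by Theorem~\ref{thm1}, and case 2) by Theorem~\ref{thm2}.

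The main obstacle is the first step: carefully verifying that $S_\xi\setminus C[\mathit{GLS},V]$ is at most countable. This is precisely where the hypothesis on $\varDelta^{\mathit{GLS}}_\infty$ is used; without it one could acquire in the closure an uncountable family of points that are not GLS-expansible from $V$, and the corollary-style reduction to the already proven Theorems~\ref{thm1} and~\ref{thm2} would no longer be automatic.
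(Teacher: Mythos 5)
Your proposal is correct and follows essentially the same route as the paper, which presents Theorem~\ref{thm4} as a corollary of Theorems~\ref{thm1} and~\ref{thm2} via the identity $S_\xi=(C[\mathit{GLS},V])^{\mathit{cl}}$ and the observation that, when $\varDelta^{\mathit{GLS}}_\infty$ is at most countable, the closure differs from $C[\mathit{GLS},V]$ only by a countable set, which is negligible for the Hausdorff--Besicovitch dimension. Your write-up actually spells out the countability of $S_\xi\setminus C[\mathit{GLS},V]$ (cylinder endpoints plus shift-preimages of $\varDelta^{\mathit{GLS}}_\infty$) in more detail than the paper does.
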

\endgroup

\section*{Acknowledgments}
The author expresses her sincere gratitude to the referee for useful
comments and remarks.

\bibliographystyle{vmsta-mathphys}
%

\end{document}